\documentclass[12pt]{amsart}
\usepackage{verbatim}
\usepackage{amssymb, amsmath}
\usepackage{graphicx}
\usepackage{caption}
\usepackage{subcaption}
 \usepackage{enumitem}
\usepackage{color}
\usepackage{url}

 \usepackage[colorlinks,linkcolor=blue,anchorcolor=blue,citecolor=blue,backref=page]{hyperref}

\usepackage{hyperref}

\begin{document}
\newtheorem{thm}{Theorem}[section]
\newtheorem*{thm*}{Theorem}
\newtheorem{lem}[thm]{Lemma}
\newtheorem{prop}[thm]{Proposition}
\newtheorem{cor}[thm]{Corollary}
\newtheorem{conj}{Conjecture}
\newtheorem{proj}[thm]{Project}
\newtheorem{question}{Question}
\newtheorem{rem}{Remark}[section]

\theoremstyle{definition}
\newtheorem*{defn}{Definition}
\newtheorem*{remark}{Remark}
\newtheorem{exercise}{Exercise}
\newtheorem*{exercise*}{Exercise}

\numberwithin{equation}{section}

\newcommand{\rad}{\operatorname{rad}}

\newcommand{\Z}{{\mathbb Z}} 
\newcommand{\Q}{{\mathbb Q}}
\newcommand{\R}{{\mathbb R}}
\newcommand{\C}{{\mathbb C}}
\newcommand{\N}{{\mathbb N}}
\newcommand{\FF}{{\mathbb F}}
\newcommand{\fq}{\mathbb{F}_q}
\newcommand{\rmk}[1]{\footnote{{\bf Comment:} #1}}

\renewcommand{\mod}{\;\operatorname{mod}}
\newcommand{\ord}{\operatorname{ord}}
\newcommand{\TT}{\mathbb{T}}
\renewcommand{\i}{{\mathrm{i}}}
\renewcommand{\d}{{\mathrm{d}}}
\renewcommand{\^}{\widehat}
\newcommand{\HH}{\mathbb H}
\newcommand{\Vol}{\operatorname{vol}}
\newcommand{\area}{\operatorname{area}}
\newcommand{\tr}{\operatorname{tr}}
\newcommand{\norm}{\mathcal N} 
\newcommand{\intinf}{\int_{-\infty}^\infty}
\newcommand{\ave}[1]{\left\langle#1\right\rangle} 
\newcommand{\Var}{\operatorname{Var}}
\newcommand{\Prob}{\operatorname{Prob}}
\newcommand{\sym}{\operatorname{Sym}}
\newcommand{\disc}{\operatorname{disc}}
\newcommand{\CA}{{\mathcal C}_A}
\newcommand{\cond}{\operatorname{cond}} 
\newcommand{\lcm}{\operatorname{lcm}}
\newcommand{\Kl}{\operatorname{Kl}} 
\newcommand{\leg}[2]{\left( \frac{#1}{#2} \right)}  
\newcommand{\Li}{\operatorname{Li}}

\newcommand{\sumstar}{\sideset \and^{*} \to \sum}

\newcommand{\LL}{\mathcal L} 
\newcommand{\sumf}{\sum^\flat}
\newcommand{\Hgev}{\mathcal H_{2g+2,q}}
\newcommand{\USp}{\operatorname{USp}}
\newcommand{\conv}{*}
\newcommand{\dist} {\operatorname{dist}}
\newcommand{\CF}{c_0} 
\newcommand{\kerp}{\mathcal K}

\newcommand{\Cov}{\operatorname{cov}}
\newcommand{\Sym}{\operatorname{Sym}}

\newcommand{\Ht}{\operatorname{Ht}}

\newcommand{\E}{\operatorname{\mathbb E}} 
\newcommand{\sign}{\operatorname{sign}} 
\newcommand{\meas}{\operatorname{meas}} 
\newcommand{\length}{\operatorname{length}} 

\newcommand{\divid}{d} 

\newcommand{\GL}{\operatorname{GL}}
\newcommand{\SL}{\operatorname{SL}}
\newcommand{\re}{\operatorname{Re}}
\newcommand{\im}{\operatorname{Im}}
\newcommand{\res}{\operatorname{Res}}
 \newcommand{\eigen}{\Lambda} 
\newcommand{\tens}{\mathbf t} 
\newcommand{\diam}{\operatorname{diam}}
\newcommand{\fixme}[1]{\footnote{Fixme: #1}}
 \newcommand{\EWp}{\mathbb E^{\rm WP}_g} 
\newcommand{\orb}{\operatorname{Orb}}
\newcommand{\supp}{\operatorname{Supp}}
\newcommand{\mmfactor }{\textcolor{red}{c_{\rm Mir}}}
\newcommand{\Mg}{\mathcal M_g} 
\newcommand{\MCG}{\operatorname{Mod}} 
\newcommand{\Diff}{\operatorname{Diff}} 
\newcommand{\If}{I_f(L,\tau)}

\newcommand{\GOE}{\operatorname{GOE}}
\newcommand{\GUE}{\operatorname{GUE}}
\newcommand{\GSE}{\operatorname{GSE}}
\newcommand{\rest}{\operatorname{rest}} 
\newcommand{\diag}{\operatorname{DIAG}} %
\newcommand{\off}{\operatorname{OFF}} %

\title[Hammarhjelm's condition]{The classification of real quadratic fields which satisfy Hammarhjelm's condition}
\author{Ze\'ev Rudnick}
\address{School of Mathematical Sciences, Tel Aviv University, Tel Aviv 69978, Israel} 
\email{rudnick@tauex.tau.ac.il}
\thanks{This research was supported by the Israel Science Foundation (grant No. 2860/24).}
\date{\today}
 \begin{abstract}
A real quadratic field satisfies Hammarhjelm's condition if its ring of integers has unique factorization, and the Minkowski lattice of its ring of integers contains no point in a certain rectangle determined by the fundamental unit. Such fields have recently appeared in the study of visible points in algebraic cut-and-project sets. We prove that there are exactly seven real quadratic fields satisfying Hammarhjelm's condition, namely those with discriminant
\[
8,\;5,\;13,\;29,\;53,\;173,\;293.
\]
The proof is based on showing that for such fields, the fundamental unit is small relative to the discriminant, together with genus theory and Bir\'o's classification of class number one fields in Yokoi's family.
\end{abstract}
\maketitle

 \tableofcontents
\section{Hammarhjelm's condition}

Let $K$ be a real quadratic field, $K=\Q(\sqrt{D})$ with $D>1$ squarefree, with Galois involution $\sigma:K\to K$ and ring of integers $O_K$. 
Let $\lambda>1$ be the fundamental unit of $K$, the unique element $\lambda>1$ so that the unit group of $O_K$ is $O_K^\times = \{\pm \lambda^n, n\in \Z  \}$. 

The Minkowski embedding 
\[
f: K\hookrightarrow \R^2, \quad \alpha\mapsto \left(\alpha,\sigma(\alpha) \right)
\]
  embeds $O_K$ as a lattice $f(O_K)$ in $\R^2$.

Following \cite{GKW}, we say that $K$ satisfies Hammarhjelm's condition  if
\begin{enumerate}
\item $O_K$ has unique factorization
\item  Condition H holds: 
\begin{equation}\label{eq:the H condition}
f(O_K)\cap\bigl((1,\lambda)\times(-1,1)\bigr)=\emptyset .
\end{equation}
\end{enumerate}

The known examples until now were $D=2, 5, 13, 29, 53$   \cite{Hammarhjelm, GKW}. 



Real quadratic fields satisfying Hammarhjelm's condition have recently played a central role in the study of visible points in certain cut-and-project sets \cite{GKW, KO}, motivating the problem of determining all such fields. 
Our result is:

\begin{thm}\label{main thm}
There are exactly seven real quadratic fields $K=\Q(\sqrt{D})$ which satisfy Hammarhjelm's condition,  namely those with 
\[
D=2,5, 13,  29, 53, 173, 293.
\]
\end{thm}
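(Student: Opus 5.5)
The plan is to show that Condition H forces the fundamental unit to be small compared with the discriminant. We then use genus theory to pin down the shape of $D$, and finish with Bir\'o's classification.

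\textbf{Step 1 (norm of the unit).} First we show $N(\lambda)=-1$. Suppose instead $N(\lambda)=+1$, so $\lambda\sigma(\lambda)=1$. Consider the elements $\alpha=a+b\omega$, where $\omega$ generates $O_K$ over $\Z$, and let $a$ range so that $\alpha$ lands in the interval $(1,\lambda)$. A pigeonhole or Minkowski argument on the lattice $f(O_K)$ should then produce a point in the box. Concretely, the box $(1,\lambda)\times(-1,1)$ has area $2(\lambda-1)$, and the covolume of $f(O_K)$ is $\sqrt{d_K}$. The unit action $(x,y)\mapsto(\lambda x,\sigma(\lambda) y)$ permutes the translates of this region, and together with their negatives they tile a neighbourhood of the axis. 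This should give a contradiction whenever the area exceeds the covolume.

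\textbf{Step 2 (the unit is small).} Next we turn Condition H into an inequality $\lambda\ll \sqrt{d_K}$; more precisely we expect $\lambda<\sqrt{d_K}+c$. Consider the lattice points $\alpha=m+\lambda$ and $\alpha=m-\lambda$ for integers $m$, and also $\lambda/\beta$ for small $\beta$. Condition H says no $\alpha\in O_K$ satisfies $1<\alpha<\lambda$ and $|\sigma\alpha|<1$. Taking $\alpha=\lfloor\ \rfloor$-shifts of $\sqrt{D}$, i.e. $\alpha=k+\omega$ with $k$ chosen so that $|\sigma(\alpha)|<1$, gives $\alpha\approx\sqrt{d_K}$. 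The condition then forces $\alpha\ge\lambda$ or $\alpha\le1$, hence $\lambda\le\sqrt{d_K}+1$ or so.

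\textbf{Step 3 (shape of $D$).} Since $N(\lambda)=-1$, write $\lambda=(t+u\sqrt{d})/2$ with $t^2-du^2=-4$. The bound $\lambda\lesssim\sqrt{d}$ forces $u=1$ (or $u$ tiny), so $d=t^2+4$; for $D=2$ we get $d=8$, the exceptional case. Class number one together with genus theory forces $d$ to be prime or $8$. So $d=p=t^2+4$ with $t$ odd, which is exactly Yokoi's family $d=m^2+4$. Bir\'o's theorem says $h(m^2+4)=1$ only for $m\in\{1,3,5,7,13,17\}$, i.e. $d=5,13,29,53,173,293$. Any residual small cases with $u=2$ (those with $d=m^2+1$) would be ruled out similarly, using Bir\'o's companion result or a direct check.

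\textbf{Step 4 (verification).} Finally we verify Condition H directly for the seven fields. The box is bounded and the lattice is explicit, so this is a finite check.

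The main obstacle is Step 2: getting a sharp enough inequality $\lambda<\sqrt{d}+O(1)$, sharp enough to force $u=1$ exactly, which requires choosing the right lattice point. The first candidate I would try is $\alpha=\lambda-\lceil\sigma$-correction$\rceil$, or $\alpha=(\lfloor\sqrt d\rfloor+\sqrt d)/2$ adjusted for parity.
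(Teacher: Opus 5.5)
There is a genuine gap in Step 1, and Step 3 depends on it. The area argument cannot give $N(\lambda)=-1$.

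First, the box $(1,\lambda)\times(-1,1)$ is not centrally symmetric, so Minkowski's theorem does not apply. The heuristic ``area exceeds covolume $\Rightarrow$ lattice point'' is in fact false for this box. For $D=293$, which satisfies condition H, one has $2(\lambda-1)\approx 32.1>\sqrt{293}\approx 17.1$. Second, the argument never uses the hypothesis $N(\lambda)=+1$, so at best it could bound $\lambda$; it cannot detect the sign of the norm.

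In Step 3 you use $N(\lambda)=-1$ twice. You use it to write $t^2-du^2=-4$. You also use it to get $h_K^+=h_K$, so that genus theory forces $d$ to be $8$ or a prime. Without it, fields with $h_K=1$ and $h_K^+=2$ are not excluded, for example $D=p\equiv 3 \bmod 4$, $D=2p$, or $D=pq$ with $p\equiv q\equiv 3\bmod 4$. The paper spends its whole mod-$4$ case analysis, and the narrow Richaud--Degert proposition, on exactly these fields.

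The claim itself is true, and the repair is short. If $N(\lambda)=+1$, then $\sigma(\lambda)=\lambda^{-1}$, and $\lambda+\lambda^{-1}\in\Z$ exceeds $2$. Hence it is at least $3$, so $\lambda>2$. Then $f(\lambda-1)=(\lambda-1,\lambda^{-1}-1)\in(1,\lambda)\times(-1,0)$, violating H. This is the paper's argument for narrow Richaud--Degert fields, applied uniformly, and it replaces the paper's case split.

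Alternatively, your Minkowski instinct works if you use the symmetric box $(-\lambda,\lambda)\times(-1,1)$, of area $4\lambda$. A nonzero lattice point in it may be taken with $\alpha>0$, and $|N(\alpha)|\ge 1$ rules out $\alpha\le 1$. So H gives $\lambda<\sqrt{d_K}$. Write $\lambda=(t+u\sqrt{d_K})/2$ with $t,u\ge 1$. Then $u=1$ and $t<\sqrt{d_K}$, hence $t^2-d_K=-4$, which gives Steps 1--3 at once.

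Your Step 2 is essentially the paper's proposition on condition H, which uses the points $\lfloor\sqrt D\rfloor+\sqrt D$, resp.\ $\lfloor\sqrt D/2\rfloor+\omega$. It is not the real obstacle. Once $N(\lambda)=-1$ is known, $u\ge 2$ forces $\lambda\ge\sqrt{d}+\sqrt{d-1}$, so no residual $u=2$ cases arise. Step 4 as a finite check is fine; the paper instead proves H for all of Yokoi's family with $x$ odd.
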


The cases $D=2,5, 13, 29, 53$ were previously known  \cite{Hammarhjelm, GKW}.

About the proof: The main point is to realize that Hammarhjelm's condition implies that the regulator of the field is small relative to the discriminant, so the class number formula forces the class number to be large. This can be converted to a proof of finiteness of such fields using Siegel's theorem, which is  ineffective. To pin down the full list of extensions, we use a number of steps, with one of the cases reduced to a special family of fields where those with class number one are all known: Yokoi's family, with squarefree discriminant of the form $D=m^2+4$, classified by Bir\'o  in 2003  \cite{Biro}. 

 \section{Background on real quadratic fields}
 
 We may write $K=\Q(\sqrt{D})$ with $D>1$ squarefree, and then the discriminant of $K$ is 
\[
\Delta_K = \begin{cases}  4D,& D=2,3 \bmod 4\\ D,& D=1 \bmod 4.
\end{cases}
\]

The ring of integers of $K$ is $O_K=\Z[\omega]$, where
\[
\omega=\omega_D = \begin{cases} \sqrt{D},& D=2,3 \bmod 4\\ \frac{1+\sqrt{D}}2,& D=1 \bmod 4 . \end{cases}
\]

\subsection{The fundamental unit}

 The following Lemma is well known, a proof is included for the reader's convenience.
 \begin{lem}\label{lem:norm} 
Let $D>1$ be squarefree with $D\equiv 1 \pmod 4$. 
Write the fundamental unit $\lambda>1$ in the form
\[
\lambda=m+n\omega,
\qquad m,n\in\Z.
\]
Then
\[
m\ge 0,
\qquad
n\ge 1.
\]
\end{lem}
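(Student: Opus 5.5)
The plan is to use the conjugate $\sigma(\lambda)$ together with the constraint $\lambda>1$, and translate these into sign conditions on $m$ and $n$. We have $\omega=\frac{1+\sqrt D}{2}$ and $\sigma(\omega)=\frac{1-\sqrt D}{2}$. Since $D\ge 5$, we get $\omega>1$ and $-1<\sigma(\omega)<0$; in fact $\sigma(\omega)\le \frac{1-\sqrt5}{2}<0$, and $|\sigma(\omega)|<\omega$. Since $\lambda$ is a unit, $\lambda\sigma(\lambda)=\pm1$, and because $\lambda>1$ we have $|\sigma(\lambda)|=1/\lambda<1$.

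The key identity is
\[
\lambda-\sigma(\lambda)=n(\omega-\sigma(\omega))=n\sqrt D .
\]
Since $\lambda>1>|\sigma(\lambda)|$, the left side is positive, so $n\sqrt D>0$ and hence $n\ge1$.

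For $m$, use the sum instead:
\[
\lambda+\sigma(\lambda)=2m+n(\omega+\sigma(\omega))=2m+n .
\]
The left side is positive, again because $\lambda>1>|\sigma(\lambda)|$, so $2m+n>0$. This alone gives only $m>-n/2$, which is not enough.

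To get $m\ge0$, I will argue by contradiction. Suppose $m\le -1$. Then
\[
\sigma(\lambda)=m+n\sigma(\omega)\le -1+n\sigma(\omega)<-1,
\]
because $n\ge1$ and $\sigma(\omega)<0$. This contradicts $|\sigma(\lambda)|<1$. Hence $m\ge0$.

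The main point needing care is the $m$ step. The trace alone does not suffice, but combining the sign of $\sigma(\omega)$ with $n\ge1$ makes it immediate, so I expect no real obstacle. The only input used is $D\ge5$, which guarantees $\sigma(\omega)<0$.
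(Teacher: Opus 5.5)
Your proof is correct and follows essentially the paper's route: the identity $\lambda-\sigma(\lambda)=n\sqrt D$ together with $|\sigma(\lambda)|=1/\lambda<1$ gives $n\ge1$, and then $|\sigma(\lambda)|<1$ combined with $\sigma(\omega)<0$ rules out $m\le -1$ (the paper phrases this as $m>\frac n2(\sqrt D-1)-1>-1$). One harmless slip: your aside $-1<\sigma(\omega)$ is false once $D>9$ (e.g.\ $D=13$ gives $\sigma(\omega)\approx-1.30$), but you never use it, since only $\sigma(\omega)<0$ enters the argument.
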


\begin{proof}
First note that $n\neq 0$. Indeed, if $n=0$, then $\lambda=m\in\Z$, and since
$\lambda$ is a unit, we would have $\lambda=\pm1$, contradicting $\lambda>1$.

Suppose now that $n<0$. Writing
\[
\lambda=\left(m+\frac n2\right)+\frac n2\sqrt D,
\]
we see that
\[
\sigma(\lambda)
=
\left(m+\frac n2\right)-\frac n2\sqrt D
=
\left(m-\frac{|n|}{2}\right)+\frac{|n|}{2}\sqrt D.
\]
Since $\sqrt D>1$, it follows that
\[
\sigma(\lambda)>\lambda>1.
\]
On the other hand, because $\lambda$ is a unit,
\[
N(\lambda)=\lambda\,\sigma(\lambda)=\pm1,
\]
and therefore
\[
|\sigma(\lambda)|=\frac1{\lambda}<1,
\]
a contradiction. Hence $n>0$, and since $n$ is integral, we obtain
\[
n\ge1.
\]

Next, because $\lambda>1$ is a unit, we again have
\[
|\sigma(\lambda)|<1.
\]
Writing
\[
\sigma(\lambda)
=
m+\frac n2-\frac n2\sqrt D,
\]
the inequality $|\sigma(\lambda)|<1$ implies
\[
-1
<
m+\frac n2-\frac n2\sqrt D
<
1.
\]
From the left-hand inequality we obtain
\[
m>\frac n2(\sqrt D-1)-1.
\]
Since $D\ge5$ and $n\ge1$, the right-hand side is strictly greater than $-1$. Thus
$m>-1$. As $m$ is an integer, it follows that
\[
m\ge0.
\]

Therefore  $\lambda=m+n\omega$ with $m\ge 0$, $n\ge1$.
\end{proof}
 
\subsection{Narrow vs wide equivalence}
For a real quadratic field $K$, there is a distinction between narrow and wide equivalences of ideals, that is two fractional ideals are narrowly equivalent if there is an element  $\alpha\in K$  of positive norm $N(\alpha)>0$, so that $I=(\alpha) J$, and equivalent in the ordinary sense without the restriction that $\alpha$ has positive norm.  If there a unit of negative norm, then the two notions coincide, otherwise they are distinct.

Denote  by ${\rm Cl}(K)$ the ideal classes, and by ${\rm Cl}^+(K)$ the narrow ideal classes. 
The narrow class number is $h_K^+ =\# {\rm Cl}^+(K) $, and the wide class number is  $h_K=\#{\rm Cl}(K)$. 
If there is a  unit of negative norm, then $h_K^+=h_K$, and otherwise $h_K^+=2h_K$.

\subsection{Genus theory}
 We recall the restrictions imposed by genus theory on the prime factorization of the discriminant of a real quadratic field with class number one.

According to genus theory, the narrow class number $h_K^+$ is divisible by $2^{r(K)-1}$,  
where the integer $r(K)$ is the number of distinct prime factors of the discriminant $\Delta_K$; recall $\Delta_K=4D$ if $D=2,3 \bmod 4$, and $\Delta_K=D$ if $D=1 \bmod 4$.

Hence a necessary condition for $K=\Q(\sqrt{D})$ to have narrow class number one is one of the  following two cases:
\begin{enumerate}
\item $D=1 \bmod 4$ then $\Delta_K = D=p$ is prime.
\item $D=2 \bmod 4$ then the only option is $D=2$, $\Delta_K=8$. 
\end{enumerate}

If there is a unit of negative norm, then $h_K=h_K^+$ and the cases above determine when the class number is one. However, if  there are no units with negative norm, so that $h_K^+=2h_K$,  then we will need to treat the case when $r(K)=2$. Of concern to us in this case will be when $D=1 \bmod 4$, so that the discriminant $\Delta=D=pq$ is a product of two distinct odd primes.

\subsection{The $4$-rank of the narrow class group}


The $4$-rank of a finite abelian group $A$ is the number   of cyclic summands of $A$ whose order is divisible by $4$. For a discriminant $\Delta$, denote by $e_4(\Delta)$ the $4$-rank of the narrow ideal class group of $K=\Q(\sqrt{\Delta})$.  So if $e_4(\Delta)>0$ then $h_K^+$ is divisible by $4$, and therefore  $h_K>1$.  R\'edei  and Reichardt \cite{Redei-Reichardt, Redei} showed
that  
\[
e_4(\Delta)= r(K)-1-{\rm rank}\, M(\Delta)
\]
where $M(\Delta)=(\varepsilon_{ij})_{i,j=1}^{r(K)}$ is the $r(K)\times r(K)$   matrix defined over $\FF_2$, whose entries are given by 
\[
(-1)^{\varepsilon_{ij}} = \leg{d_i}{d_j},\quad i\neq j,
\] 
where $\Delta=\prod_{i=1}^{r(K)} d_i$ is the decomposition of the discriminant into prime power discriminants, we set $\leg{x}{d_j} = \leg{x}{2} = (-1)^{(x-1)/4}$ if $d_j$ is even, and 
\[
\varepsilon_{jj} =\sum_{\substack{i\\i\neq j}} \varepsilon_{ij}.
\]

\begin{lem}\label{lem:4-rank}
Let $\Delta=pq$ with $p$, $q$ distinct primes, $p\equiv q\equiv 1 \bmod 4$ and $q=p+4$.
Then the $4$-rank of the narrow class group is one, and hence the class number is at least $2$.
\end{lem}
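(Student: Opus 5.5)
We apply the R\'edei--Reichardt formula directly. Here $\Delta=pq$ with $p\equiv q\equiv 1\bmod 4$, so both primes are odd and the prime discriminants are $d_1=p$, $d_2=q$; hence $r(K)=2$. The formula then reads
\[
e_4(\Delta)=1-{\rm rank}\, M(\Delta),
\]
so it suffices to show that $M(\Delta)$ is the zero matrix over $\FF_2$, that is, ${\rm rank}\,M(\Delta)=0$. By definition, the off-diagonal entries are determined by $\leg{p}{q}$ and $\leg{q}{p}$. Each diagonal entry is the sum of the other entries in its column, so $\varepsilon_{11}=\varepsilon_{21}$ and $\varepsilon_{22}=\varepsilon_{12}$. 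Thus $M(\Delta)=0$ as soon as $\leg{p}{q}=\leg{q}{p}=1$.

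To compute these symbols we use $q=p+4$:
\[
\leg{q}{p}=\leg{p+4}{p}=\leg{4}{p}=1,
\]
since $4$ is a square. Since $p\equiv 1\bmod 4$, quadratic reciprocity gives $\leg{p}{q}=\leg{q}{p}=1$. Alternatively, one can compute directly $\leg{p}{q}=\leg{q-4}{q}=\leg{-4}{q}=\leg{-1}{q}=1$, because $q\equiv 1\bmod 4$. Therefore all $\varepsilon_{ij}=0$, so $M(\Delta)=0$ and $e_4(\Delta)=1$.

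The conclusion follows from here. Since $e_4(\Delta)=1$, the narrow class group has a cyclic summand of order divisible by $4$, so $4\mid h_K^+$. Since $h_K^+\in\{h_K,2h_K\}$, we get $2\mid h_K$, hence $h_K\ge 2$.

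The only point needing care is the sign and diagonal conventions in the R\'edei matrix. Because both primes are $\equiv 1\bmod 4$, the symbols are symmetric and no factor of $2$ enters the discriminant, so the even-discriminant convention never arises. I therefore expect no real obstacle: the argument is a direct verification.
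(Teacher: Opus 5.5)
Your proof is correct and matches the paper's argument. You apply the R\'edei--Reichardt formula with $r(K)=2$, use $q=p+4$ and reciprocity (or your direct computation $\leg{p}{q}=\leg{-4}{q}=1$) to show the R\'edei matrix vanishes, and conclude $e_4=1$, $4\mid h_K^+$, so $h_K$ is even; your explicit treatment of the diagonal entries via the column-sum convention just spells out what the paper does implicitly.
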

\begin{proof}
In this case,  $r=2$, $d_1=p$, $d_2=q$, and by quadratic reciprocity 
\[
\leg{d_1}{d_2} = \leg{p}{q}=\leg{q}{p}=\leg{d_2}{d_1}
\]
and hence all entries in $M$ are equal: $M=\left(\begin{smallmatrix} u&u\\u&u \end{smallmatrix}\right)$. 

Assume further that $q=p+4$. Then $\leg{q}{p} = \leg{p+4}{p}=\leg{4}{p} = +1$ so that $(-1)^{\varepsilon_{12}} = 1 = (-1)^{\varepsilon_{21}}$ hence $\varepsilon_{12 }=0=\varepsilon_{21}$ and the R\'edei matrix is 
\[
M(pq) = \begin{pmatrix}  0&0\\0&0 \end{pmatrix}
\]
which has rank zero over $\FF_2$. Therefore the $4$-rank of the narrow class group is
\[
e_4(pq)= r(K)-1-{\rm rank}\, M(pq)=2-1-0=1.
\]
We deduce that $4\mid h_K^+$, and since $h_K$ equals $h_K^+$ or $h_K^+/2$,   we cannot have $h_K=1$.
\end{proof}

\section{Condition H}
Recall  condition H: 
\[
f(O_K)\cap\bigl((1,\lambda)\times(-1,1)\bigr)=\emptyset .
\]
We explore some consequences of condition H.

We first show that condition H forces the fundamental unit $\lambda$ to be small relative to the discriminant $\Delta_K$ of the field:

\begin{prop}\label{prop H}
Let $K=\mathbf{Q}(\sqrt D)$ be a real quadratic field, where $D>1$ is squarefree, and let
$\lambda>1$ be the fundamental unit of $K$. Assume that $K$ satisfies condition H. 
Then:
\begin{enumerate}
\item If $D\equiv 2,3 \pmod 4$, then
\[
\lambda< 2\sqrt D=\sqrt{\Delta_K}.
\]

\item If $D\equiv 1 \pmod 4$, then
\[
\lambda <\sqrt D+\frac 12=\sqrt{\Delta_K}+\frac 12.
\]
\end{enumerate}
\end{prop}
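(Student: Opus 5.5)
The plan is to exhibit, for each of the two residue classes, an explicit element of $O_K$ whose Minkowski image has second coordinate in $(-1,1)$. Condition H then forbids its first coordinate from lying in $(1,\lambda)$, and this yields the bound on $\lambda$. The natural candidates are elements $\alpha = a+\omega$ with $a\in\Z$ chosen so that $\sigma(\alpha)$ lies in $(-1,1)$, or better in $[0,1)$. Since $\alpha-\sigma(\alpha) = \omega-\sigma(\omega)$ is fixed, namely $2\sqrt D$ or $\sqrt D$, this pins down $\alpha$ to within an interval of length about $1$ just above $\omega-\sigma(\omega)$.

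For $D\equiv 2,3 \pmod 4$, take $a=\lfloor \sqrt D\rfloor$ and $\alpha = a+\sqrt D$. Then $\sigma(\alpha)=a-\sqrt D\in(-1,0]$; it is in fact in $(-1,0)$ since $D$ is not a square. Also $\alpha = 2\sqrt D + (a-\sqrt D)\in (2\sqrt D -1, 2\sqrt D)$, and clearly $\alpha>1$ because $D\ge 2$. Hence $f(\alpha)$ has second coordinate in $(-1,1)$ and first coordinate in $(1,\infty)$. Condition H forces $\alpha\ge\lambda$, which gives $\lambda\le \alpha<2\sqrt D$, and this is part (1).

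For $D\equiv 1 \pmod 4$ (so $D\ge 5$), $\omega=(1+\sqrt D)/2$ and $\sigma(\omega)=(1-\sqrt D)/2$. Choose $a\in\Z$ with $a+\sigma(\omega)\in[0,1)$, i.e.\ $a=\lceil(\sqrt D-1)/2\rceil$. The same argument with $\alpha=a+\omega$ gives $\alpha = \sqrt D + \sigma(\alpha) <\sqrt D+1$ and $\alpha>1$. Condition H then gives only $\lambda<\sqrt D+1$, which is weaker than claimed. To reach $\sqrt D+\tfrac12$, I will use two lattice points. Alongside $\alpha$, consider $\alpha-1$, whose conjugate is $\sigma(\alpha)-1\in[-1,0)$, and which lies in $(-1,1)$ provided $\sigma(\alpha)\neq 0$; that holds since $\sqrt D$ is irrational. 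So both $\alpha$ and $\alpha-1$ have conjugates in $(-1,1)$, and their first coordinates $\sqrt D+\sigma(\alpha)$ and $\sqrt D+\sigma(\alpha)-1$ straddle $\sqrt D$ within a window of length $1$.

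The hard step is that the two-point argument still gives only $\lambda\le \sqrt D+\sigma(\alpha)-1<\sqrt D$ when $\sigma(\alpha)$ is small. When $\sigma(\alpha)$ is close to $1$ it gives something near $\sqrt D$, which is fine, and only in borderline configurations does one need the $+\tfrac12$ slack. I would therefore pick whichever of $\alpha,\alpha-1$ has conjugate in $[-\tfrac12,\tfrac12]$. Its first coordinate is then $\sqrt D+\tau$ with $|\tau|\le\tfrac12$, so it lies in $[\sqrt D-\tfrac12,\sqrt D+\tfrac12]$, which exceeds $1$ since $D\ge5$. Condition H then gives $\lambda\le \sqrt D+\tau\le\sqrt D+\tfrac12$, with strictness because $\tau=\pm\tfrac12$ would force $\sqrt D\in\Z+\tfrac12$, which is impossible. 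The only other thing to confirm is that the chosen element really is a lattice point with first coordinate greater than $1$; this is immediate from $D\ge5$, and no input from Lemma \ref{lem:norm} is needed for this direction.
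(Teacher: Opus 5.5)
Your proof is correct and is essentially the paper's: part (1) uses the same element $\lfloor\sqrt D\rfloor+\sqrt D$, and in part (2) choosing whichever of $\alpha,\alpha-1$ has conjugate in $[-\tfrac12,\tfrac12]$ is exactly the paper's choice of $a$ as the integer nearest to $(\sqrt D-1)/2$. There is one minor slip: $\tau=\pm\tfrac12$ would force $\sqrt D\in\Z$, not $\sqrt D\in\Z+\tfrac12$, though this is equally impossible. Your ``hard step'' worry is also backwards: since $\alpha-1$ has conjugate in $(-1,0)$ and first coordinate in $(\sqrt D-1,\sqrt D)\subset(1,\infty)$, condition H by itself already gives the stronger bound $\lambda\le\alpha-1<\sqrt D$.
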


\begin{proof}
Suppose first that $D\equiv 2,3 \pmod 4$. Then
\[
O_K=\Z[\sqrt D].
\]
Assume, for contradiction, that
\[
\lambda\geq 2\sqrt D.
\]
Let
\[
\alpha=\lfloor\sqrt D\rfloor+\sqrt D \in O_K.
\]
Since $D>1$ is squarefree, $\sqrt D\notin\Z$, and hence
\[
\sigma(\alpha) = \lfloor\sqrt D\rfloor-\sqrt D = -\{\sqrt D\},
\]
where $\{\sqrt D\}\in(0,1)$ denotes the fractional part of $\sqrt D$. Thus
\[
-1<\sigma(\alpha)<0.
\]
Moreover,
\[
1<\alpha = \lfloor\sqrt D\rfloor+\sqrt D < 2\sqrt D \leq \lambda
\]
by our assumption. 
Therefore
\[
f(\alpha)\in (1,\lambda)\times(-1,1) 
\]
which contradicts condition H. Hence we must have
\[
\lambda< 2\sqrt D.
\]

 Suppose now that $D\equiv 1 \pmod 4$. Then
\[
O_K=\Z[\omega],
\qquad
\omega=\frac{1+\sqrt D}{2}.
\]
We claim that condition H implies
\[
\lambda <\sqrt D+\frac12.
\]

Assume, for contradiction, that
\[
\lambda \geq \sqrt D+\frac12.
\]
Let
\[
x=\frac{\sqrt D-1}{2},
\qquad
m=\left\lfloor x+\frac12\right\rfloor= \left\lfloor \frac{\sqrt{D}}{2} \right\rfloor,
\]
and define
\[
\alpha=m+\omega.
\]

Since
\[
\omega'=\sigma(\omega)=\frac{1-\sqrt D}{2}=-x,
\]
we have
\[
\sigma(\alpha)=m-x.
\]
By construction, $m$ is the nearest integer to $x$, and therefore
\[
|\sigma(\alpha)|=|m-x|\le \frac12<1.
\]

On the other hand,
\[
m<x+\frac12,
\]
so
\[
\alpha
=
m+\omega
<
x+\frac12+\omega
=
\frac{\sqrt D-1}{2}
+\frac12
+\frac{1+\sqrt D}{2}
=
\sqrt D+\frac12.
\]
Hence
\[
\alpha<\sqrt D+\frac12\leq \lambda.
\]

Since $D\ge 5$, we have $\omega>1$, and therefore
\[
\alpha=m+\omega>1.
\]
Thus
\[
1<\alpha<\lambda
\qquad\text{and}\qquad
|\sigma(\alpha)|<1.
\]
Consequently,
\[
f(\alpha)\in (1,\lambda)\times(-1,1) 
\]
which contradicts condition H. 
Therefore
\[
\lambda< \sqrt D+\frac12.
\]
This completes the proof.
\end{proof}

\section{Yokoi's family}

Yokoi's family of real quadratic fields is  $\Q(\sqrt{D})$ with $D=x^2+4$ squarefree. 
We show that they all satisfy condition H.
 
\begin{prop}
 Let $D=x^{2}+4>5$ be squarefree, where $x=2m+1$ is odd, and let
$K=\Q(\sqrt D)$. 
Then $K$ satisfies condition H. 
\end{prop}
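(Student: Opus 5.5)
Write $D=x^2+4$ with $x=2m+1$ odd, so $D\equiv 5\pmod 8$ and $O_K=\Z[\omega]$ with $\omega=\frac{1+\sqrt D}{2}$. The first step is to identify the fundamental unit. The element
\[
\varepsilon=\frac{x+\sqrt D}{2}=m+\omega
\]
has norm $\frac{x^2-D}{4}=-1$, so it is a unit. I will show it is fundamental. By Lemma~\ref{lem:norm}, the fundamental unit is $\lambda=a+b\omega$ with $a\ge 0$ and $b\ge 1$. Since $\varepsilon=\lambda^k$ for some $k\ge1$ and $\varepsilon=m+\omega$ has $\omega$-coefficient $1$, the smallest unit exceeding $1$ must have $b=1$. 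Indeed, writing $\lambda=\frac{u+v\sqrt D}{2}$ with $u,v\ge1$, the coefficients of $\lambda^k$ grow with $k$. So $\lambda=a+\omega$ with $(2a+1)^2-D=\pm4$. Because $D>5$, the value $+4$ forces $(2a+1)^2=D+4=x^2+8$, which is impossible. Hence $2a+1=x$, and $\lambda=\varepsilon=\frac{x+\sqrt D}{2}$, with $\sigma(\lambda)=-1/\lambda\in(-1,0)$.

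Next I classify the lattice points with $|\sigma(\alpha)|<1$. Write $\alpha=\frac{u+v\sqrt D}{2}\in O_K$ with $u\equiv v\pmod 2$, and suppose $f(\alpha)\in(1,\lambda)\times(-1,1)$. Then
\[
v\sqrt D=\alpha-\sigma(\alpha)\in(0,\lambda+1),
\]
so $0<v\sqrt D<\frac{x+\sqrt D}{2}+1<\sqrt D+1$. This gives $0<v<1+1/\sqrt D$, so $v=1$ and $u$ is odd. The condition $|\sigma(\alpha)|<1$ then reads $|u-\sqrt D|<2$. Since $x<\sqrt D<x+1$, the odd integers in $(\sqrt D-2,\sqrt D+2)$ are exactly $u=x$ and $u=x+2$. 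The choice $u=x$ gives $\alpha=\lambda$, which is excluded because the interval $(1,\lambda)$ is open. The choice $u=x+2$ gives $\alpha=\lambda+1>\lambda$, also excluded. So no lattice point lies in the rectangle, and condition H holds.

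The main obstacle is justifying that $\frac{x+\sqrt D}{2}$ is truly fundamental, rather than a power of some smaller unit. The argument above handles this through the minimal coefficient $b=1$, together with the fact that powers of a unit with positive coefficients have strictly larger $\sqrt D$-coefficient. I would check that monotonicity claim carefully: $\lambda^k$ with $\lambda=\frac{u+v\sqrt D}{2}$ and $u,v\ge1$ has $\sqrt D$-coefficient at least $kv/2$-ish, and in particular exceeding $\frac{1}{2}$ once $k\ge2$. The boundary check $\sqrt D-2<x$ uses $x^2<D=x^2+4<(x+1)^2$, which is valid for $x\ge2$; here $x\ge3$ since $D>5$.
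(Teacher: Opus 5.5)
Your proof is correct and is essentially the paper's argument: $\alpha-\sigma(\alpha)=v\sqrt D<\lambda+1<\sqrt D+1$ forces $v=1$, and then $|\sigma(\alpha)|<1$ leaves only $\alpha\in\{\varepsilon,\varepsilon+1\}$, neither of which lies in the open box. The one difference is that you first prove $\varepsilon=\frac{x+\sqrt D}{2}$ is fundamental, which the paper avoids. Since $\varepsilon=\lambda^k$ with $k\ge 1$ gives $\lambda\le\varepsilon$, it suffices to exclude lattice points from the larger box $(1,\varepsilon)\times(-1,1)$, and your second paragraph does exactly that with $\varepsilon$ in place of $\lambda$. So your ``main obstacle'' can simply be dropped. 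As written, its monotonicity sketch would also need $u\ge 2$, which does hold for units here because $D>5$.
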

 
\begin{proof}
The ring of integers here is $O_K = \Z[\omega]$, $\omega=\frac{1+\sqrt D}{2}$.
 Since $D=x^2+4$, we have $x^{2}-D=-4$, and therefore
\[
\lambda_1=\frac{x+\sqrt D}{2}=m+\omega
\]
is a unit of norm $-1$, with $\lambda_1>1$.  Hence it is a positive (odd) power of the fundamental unit $\lambda$, in particular $\lambda_1\geq \lambda$. 

Hence it suffices to show that there is no nonzero element
\[
\alpha=a+b\omega\in O_K=\Z[\omega]
\]
such that
\[
1<\alpha<\lambda_1
\qquad\text{and}\qquad
|\sigma(\alpha)|<1.
\]

Let
\[
\omega'=\sigma(\omega)=\frac{1-\sqrt D}{2}
\]
be the Galois conjugate. Then we require
\[
|\sigma(\alpha)| =|a+b\omega'| <1.
\]
Since $\omega+\omega'=1$, we may write
\[
\alpha = (a+b\omega')+b(\omega-\omega')
=
  \sigma(\alpha) +b\sqrt D.
\]
Hence, since $|\sigma(\alpha)|<1$, 
\[
\alpha>b\sqrt D-1.
\]

On the other hand,
\[
\lambda_1=\frac{x+\sqrt D}{2}<\sqrt D,
\]
because $x<\sqrt D=\sqrt{x^2+4}$. 
If $b\geq 2$, then
\[
\alpha>b\sqrt D-1
\ge 2\sqrt D-1
> \lambda_1,
\]
a contradiction. Therefore $b\le1$.

Next suppose $b\le0$. Since $\omega'<0$, the condition
$|a+b\omega'|<1$ implies $a\le 0$, because otherwise $a\geq 1$ by integrality, and then $a+b\omega'\geq 1$. Consequently,
\[
\alpha=a+b\omega\le0,
\]
contrary to $\alpha>1$. Hence $b=1$.

Thus every potential counterexample must be of the form
\[
\alpha=a+\omega.
\]

The condition $|\sigma(\alpha)|<1$ becomes
\[
|a+\omega'|<1.
\]
which is equivalent to
\[
-\omega'-1<a<-\omega'+1.
\]
Now
\[
-\omega'=\frac{\sqrt D-1}{2}.
\]
Since $D=x^2+4$ and $x=2m+1$, we have
\[
m=\frac{x-1}{2}
<
\frac{\sqrt D-1}{2}
<
m+\frac12.
\]
Therefore
\[
m-1<-\omega'-1<m-\frac12,
\]
and
\[
m+1<-\omega'+1<m+\frac32.
\]
Hence the interval
\[
(-\omega'-1,-\omega'+1)
\]
contains exactly the two integers $m$ and $m+1$. It follows that
\[
a=m \quad\text{or}\quad a=m+1.
\]

On the other hand,
\[
\alpha=a+\omega<\lambda_1=m+\omega
\]
implies
\[
a<m.
\]
This contradicts \(a\in\{m,m+1\}\).

Therefore there is no element $\alpha\in O_K$ satisfying
\[
1<\alpha<\lambda_1
\qquad\text{and}\qquad
|\sigma(\alpha)|<1.
\]

Hence $K$ satisfies condition H.
\end{proof}

 Bir\'o \cite{Biro} showed that the only members of Yokoi's family when the class number is one are those where $D=5, 13, 29, 53, 173, 293$. 
Hence we find:
\begin{cor}\label{cor:Yokoi} 
Among Yokoi's family, the fields satisfying both condition H and class number one are precisely
\[
D=5, 13, 29, 53, 173, 293.
\]
\end{cor}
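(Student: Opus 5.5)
The corollary follows by combining the preceding proposition with Bir\'o's classification, so the plan is short. Yokoi's family consists of the fields $\Q(\sqrt D)$ with $D=x^2+4$ squarefree. We first reduce to $x$ odd. If $x$ were even, say $x=2y$, then $D=4(y^2+1)$ would be divisible by $4$ and not squarefree. Hence every member of the family has $x=2m+1$ odd and $D\equiv 5 \pmod 8$, which is exactly the setting of the previous proposition.

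Apart from the excluded case $D=5$ (i.e.\ $x=1$), that proposition shows every member satisfies condition H. Consequently, within Yokoi's family, satisfying both condition H and class number one is the same as having class number one, together with a separate check of $D=5$.

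For $D=5$ we verify condition H directly. Here $\lambda=\omega=\frac{1+\sqrt5}{2}$ is the golden ratio. Take $\alpha=a+b\omega$ with $1<\alpha<\lambda$ and $|\sigma(\alpha)|<1$. Writing $\alpha=\sigma(\alpha)+b\sqrt5$ gives
\[
b\sqrt5<\alpha+1<\lambda+1\approx 2.618,
\]
so $b\le 1$. As in the proof of the proposition, $b\le 0$ forces $\alpha\le 0$. It remains to treat $b=1$. Then $\alpha<\lambda=\omega$ gives $a<0$, while $\alpha>1$ gives $a>1-\omega\approx -0.618$. No integer $a$ satisfies both, so condition H holds for $D=5$. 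This agrees with the paper's remark that $D=5$ was already known.

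Finally we invoke Bir\'o's theorem \cite{Biro}: the members of Yokoi's family with class number one are exactly $D=5,13,29,53,173,293$. All six satisfy condition H by the two steps above, and no other member has class number one. This gives the stated list. The argument has no substantive obstacle, since the deep input is Bir\'o's theorem, which we assume. The only points needing care are the parity reduction and the boundary case $D=5$.
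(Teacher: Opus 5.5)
Your proposal is correct and follows the paper's argument: the preceding proposition gives condition H for every member of Yokoi's family with $D>5$, and Bir\'o's theorem then identifies the class-number-one members as exactly $D=5,13,29,53,173,293$. Your parity reduction ($x$ even forces $4\mid D$) and your direct check of condition H for $D=5$ are details the paper leaves implicit, since its proposition excludes $D=5$ and the paper relies on that case being previously known, so your version is slightly more complete.
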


\section{Extensions of narrow Richaud-Degert type}
If $D=n^2-4>0$ is square-free (so $n=2m+1\geq 3$ is odd), then the corresponding real quadratic field $K=\Q(\sqrt{D})$ is called a narrow Richaud-Degert extension. 
Degert \cite[Satz 1]{Degert} showed\footnote{ \cite[Satz 1]{Degert} showed that the smallest unit $u>1$ of positive norm is $m+\omega$, but it is easy to see that there is no solution to $\lambda=\alpha^2$ for $\alpha\in O_K$, so this is indeed the fundamental unit. Alternatively, note   the continued fraction $\sqrt{\omega}  = [m;\overline{1, 2m-1}]$, which has even period \cite[Theorem 2.1.3]{Mollin}. } that  if  $D=n^2-4>5$ then fundamental unit is $\lambda =m+\omega=m+\frac 12 +\frac 12 \sqrt{D}$, which has positive norm.


We show that these extensions do not satisfy condition H if $D>5$: 
\begin{prop}\label{prop:Degert}
Let $D=(2m+1)^2-4$ be squarefree. Then $K=\Q(\sqrt{D})$ does not satisfy condition H.
\end{prop}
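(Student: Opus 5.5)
The plan is to exhibit an explicit lattice point in the forbidden rectangle, as in the proof of Proposition~\ref{prop H}. By the footnoted result of Degert, for $D=(2m+1)^2-4>5$ the fundamental unit is
\[
\lambda=m+\omega=\frac{n+\sqrt D}{2},\qquad n=2m+1 .
\]
We must therefore assume $D>5$, i.e.\ $m\ge 2$. The case $D=5$ ($m=1$) is genuinely excluded: there the fundamental unit is $\omega$ itself, and $\Q(\sqrt5)$ does satisfy condition H by the preceding section. The natural candidate is the element just below $\lambda$ on the same line $b=1$, namely
\[
\alpha=\lambda-1=(m-1)+\omega\in O_K .
\]

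The first step is to locate $\sqrt D$ between consecutive integers. Since $D=n^2-4<n^2$, and since
\[
n^2-4>(n-1)^2 \iff 2n>5,
\]
which holds for $n\ge 3$, we get $2m<\sqrt D<2m+1$. From this,
\[
\sigma(\alpha)=(m-1)+\frac{1-\sqrt D}{2}=\frac{2m-1-\sqrt D}{2}\in\left(-1,-\tfrac12\right),
\]
so $|\sigma(\alpha)|<1$.

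The second step is to check the first coordinate. Clearly $\alpha=\lambda-1<\lambda$. Moreover $\lambda=\frac{n+\sqrt D}{2}>\frac{5+\sqrt{21}}{2}>2$ for $m\ge2$, so $\alpha>1$. Hence $f(\alpha)\in(1,\lambda)\times(-1,1)$, contradicting \eqref{eq:the H condition}, and condition H fails.

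There is no real obstacle here. The only points needing care are the use of Degert's identification of $\lambda$, which requires $D>5$, and the elementary inequality $n-1<\sqrt{n^2-4}$, which gives the bound on $\sigma(\alpha)$.
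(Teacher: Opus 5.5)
Your proof is correct and is essentially the paper's argument: the same witness $\alpha=\lambda-1=(m-1)+\omega$, and the same bracketing $2m<\sqrt D<2m+1$ giving $\sigma(\alpha)\in(-1,-\tfrac12)$, with Degert's identification of $\lambda$ taken as input. You are also right that $D=5$ must be excluded (the paper imposes $D>5$ in the sentence before the proposition rather than in its statement), and your explicit check of $\alpha>1$ fills in what the paper calls ``clear''. One small slip there: at $m=2$ the inequality $\lambda>\frac{5+\sqrt{21}}{2}$ should be $\ge$.
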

\begin{proof}
Recall that the fundamental unit of $K$ is 
\[
\lambda=m+\omega.
\]
Let 
\[
\alpha = m-1+\omega=\lambda-1.
\]
Then clearly
\[
1<\alpha<\lambda.
\]
We will show that the Galois conjugate
\[
\sigma(\alpha)= m-\frac 12 -\frac{\sqrt{D}}{2}\in (-1,-\frac 12)
\]
and therefore we found an element in $f(O_K)\cap ((1,\lambda)\times (-1,1))$.

We have
\[
(2m)^2<D=(2m+1)^2-4<(2m+1)^2
\]
if $m\geq 1$, therefore
\[
-m-\frac 12< -\frac{\sqrt{D}}{2}<-m
\]
and so 
\[
\sigma(\alpha) =  m-\frac 12 -\frac{\sqrt{D}}{2}\in  m-\frac 12 +(-m-\frac 12,-m) = (-1,-\frac 12)
\]
as claimed. 
\end{proof}

\section{Proof of Theorem~\ref{main thm}}
We want to determine a complete list of all real quadratic fields that  satisfy  Hammarhjelm's condition.

We divide into a number of cases. 

\subsection{$D=2 \bmod 4$}
When $D=2$, the class number is one, and indeed $\Q(\sqrt{2})$ satisfies Hammarhjelm's condition. 

It remains to consider $D>2$, $D=2 \bmod 4$. 
We claim that there are no such cases. 

To have class number one with $D=2 \bmod 4$, the narrow class number has to be either $h_K^+=1$, forcing $D=2$, or $h_K^+=2$, and in the latter case we need $D=2p$ with $p$ an odd prime, by genus theory. 
If there is a unit of negative norm then $h_K=h_K^+$ is therefore even, so we cannot have class number one. 
So we only need to consider the case that $D=2p$ and there is no unit of negative norm: $N(\lambda)=+1$. 

Writing $\lambda=t+\sqrt{D}$ gives $t^2-D=1$ ($t$ must be odd) or 
\[
2p=D = (t-1)(t+1).
\]
Since $t^2=D+1=3\bmod 4$, $t$ must be odd, and so 
\[
t^2-1=(t-1)(t+1)=0\bmod 4,
\]
 so cannot be of the form $2p$ with $p$ an odd prime.

\subsection{$D=3 \bmod 4$} 
We claim that there are no such cases when  Hammarhjelm's  condition holds. 

The ring of integers is $O_K=\Z[\sqrt{D}]$, so the fundamental unit is $\lambda = t+u\sqrt{D}$. 
Note that the negative Pell equation $t^2-Du^2=-1$ has no solution if $D=3 \bmod 4$ \footnote{Indeed, if $D=3 \bmod 4$ then it must be divisible by a prime $p=3 \bmod 4$, and then $t^2-Du^2=-1$ implies   $t^2=-1 \bmod p$, 
which rules out $p=3 \bmod 4$.}, hence $K$ has no unit of negative norm. Thus  $N(\lambda)=+1$ and $h_K^+=2h_K$.  

Since $D=3 \bmod 4$, the discriminant $\Delta_K=4D$ is divisible by $2$   and by at least one odd prime discriminant, hence $r(K)\geq 2$.  
In order for the class number $h_K=1$ we need $r(K)=2$, which  will happen if and only if $D=p$ is a prime (recall $D$ is squarefree). In conclusion, for $h_K=1$ we need $D=p$ is prime, $p=3 \bmod 4$. 

Using Proposition~\ref{prop H}, that $\lambda = t+u\sqrt{D}< 2\sqrt{D}$ $(t,u\geq 1$), we deduce that $u=1$ (if $u\geq 2$ then since $t\geq 0$, necessarily $\lambda\geq 2\sqrt{D}$), so that 
\[
\lambda=t+\sqrt{D}, \quad t>0,
\]
 and the condition $N(\lambda)=+1$ gives $t^2-D=1$, that is 
\[
p=D=t^2-1=(t-1)(t+1).
\]
But $D=p$ is an odd prime, which forces  $t$ even, hence the only possibility is $t=2$ and $D=3$.  But then the fundamental unit  $\lambda=2+\sqrt{3}$ does not satisfy condition H since by Proposition~\ref{prop H} that would force $\lambda< 2\sqrt{3}$, while  $\lambda=2+\sqrt{3}>2\sqrt{3}$.

\section{The case $D=1 \bmod 4$} 

\begin{lem}\label{fund unit when D=1 mod 4}
If $D=1 \bmod 4$ is squarefree, and the field $\Q(\sqrt{D})$ satisfies condition H, then the fundamental unit is 
\begin{equation}\label{shape of fund unit}
\lambda = m+\omega, \quad 0\leq m<\frac{\sqrt{D}}{2}.
\end{equation}
\end{lem}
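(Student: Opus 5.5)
We combine Lemma~\ref{lem:norm} with Proposition~\ref{prop H}(2). By Lemma~\ref{lem:norm}, the fundamental unit has the form $\lambda=m+n\omega$ with $m\ge 0$ and $n\ge 1$, so it remains to show that $n=1$ and $m<\sqrt D/2$. Proposition~\ref{prop H}(2) gives $\lambda<\sqrt D+\tfrac12$ under condition H, and we play this upper bound against the lower bound coming from the shape of $\lambda$.

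First we rule out $n\ge 2$. Since $m\ge 0$ and $\omega>0$, we get
\[
\lambda=m+n\omega\ge 2\omega=1+\sqrt D>\sqrt D+\tfrac12,
\]
which contradicts Proposition~\ref{prop H}(2). Hence $n=1$ and $\lambda=m+\omega$ with $m\ge 0$.

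Next we bound $m$. From $\lambda=m+\tfrac12+\tfrac{\sqrt D}{2}<\sqrt D+\tfrac12$ we get $m<\tfrac{\sqrt D}{2}$, which is the claimed range. The upper bound is therefore immediate from the same inequality.

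I expect no real obstacle here. The only point needing care is that Lemma~\ref{lem:norm} requires $D\ge 5$, which is automatic since $D\equiv 1 \pmod 4$ is squarefree and $D>1$. One might alternatively sharpen the bound using $|\sigma(\lambda)|<1$, which gives $m\ge \lfloor(\sqrt D-1)/2\rfloor$, but that is not needed for the statement.
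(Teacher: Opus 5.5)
Your proposal is correct and follows essentially the same route as the paper. Like the paper, you combine Lemma~\ref{lem:norm} with the bound $\lambda<\sqrt D+\tfrac12$ from Proposition~\ref{prop H}(2) to force $n=1$ and then $m<\sqrt D/2$. The only difference is cosmetic: you rule out $n\ge 2$ via $\lambda\ge 2\omega=1+\sqrt D$, whereas the paper rearranges the inequality into $\bigl(m+\tfrac{n-1}{2}\bigr)+\tfrac{n-2}{2}\sqrt D<0$.
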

\begin{proof}
If $D=1 \bmod 4$, then $O_K=\Z[\omega_K]$, $\omega_K=\frac{1+\sqrt{D}}{2}$, the discriminant is $\Delta_K=D$, and the fundamental unit is $\lambda=m+n\omega_K$ with   
$m\geq 0$, $n\geq 1$ (Lemma~\ref{lem:norm}).   
Condition $H$ is 
\[
\lambda<\sqrt{D}+\frac 12 
\]
or
\[
m+\frac n2 +\frac{n}{2}\sqrt{D}< \sqrt{D}+\frac 12 
\]
giving
\[
\left( m+\frac{n-1}{2} \right) + \frac{n-2}{2}\sqrt{D}<0.
\]
If $n\geq 2$ then this cannot happen, because the LHS is strictly positive (recall also $m\geq 0$). Hence $n=1$ is the only option, so 
\[
\lambda = m+\omega_K = m+\frac 12 +\frac{\sqrt{D}}{2}.
\]
Condition H  then implies
\[
\lambda=m+\frac 12 +\frac{\sqrt{D}}{2}< \sqrt{D}+\frac{1}{2}, \quad m\geq 0
\]
which is equivalent to $0\leq m<\frac{\sqrt{D}}{2}$. Hence we find that 
\begin{equation*}
\lambda = m+\omega, \quad 0\leq m<\frac{\sqrt{D}}{2}.
\end{equation*}
\end{proof}

To have class number one we need  $r(K)\leq 2$ since $2^{r(K)-1}\mid h_K^+$. Thus when $D=1 \bmod 4$, either

\begin{enumerate}
\item $\Delta_K=D=p$ is a prime, $p\equiv 1\pmod 4$; or

\item $\Delta_K=D=pq$ is a product of two distinct primes. Then $r(K)=2$, so $h_K^+$ is even, and if $h_K=1$ then necessarily $h_K^+=2$, which implies that there is no unit of negative norm.
\end{enumerate}

\subsection{Prime discriminant}
We show that when $D=p$ is a prime $p=1 \bmod 4$ and condition H holds then the fundamental unit must have norm $-1$. Indeed, by Lemma~\ref{fund unit when D=1 mod 4}, the fundamental unit is   $\lambda = m+\omega = \frac{2m+1+\sqrt{p}}{2}$,  with $m\geq 0$. 
If $N(\lambda)=+1$ then 
\[
+1 = N(\lambda)=\frac{(2m+1)^2-p}{4} \; \longleftrightarrow \;  p=(2m+1)^2-4 =(2m-1)(2m+3)
\]
and since $p$ is prime, this forces $m=1$ and $p=5$. For   $m=1$, when $p=5$, the unit $m+\omega=1+\omega$ is not the fundamental unit. 
The fundamental unit is $\omega = (1+\sqrt{5})/2$,  which has norm $-1$.

Hence we deduce that $N(\lambda)=-1$, and since $\lambda=m+\omega_K$, we obtain  the equation
\[
-1= N(\lambda) = \frac{(2m+1)^2-p}{4} \quad \longleftrightarrow \quad  p=(2m+1)^2+4
\]
that is
\[
D=p=(2m+1)^2+4. 
\]
Thus we are  a subset  of Yokoi's family,  and we showed in Corollary~\ref{cor:Yokoi}  that the discriminants in Yokoi's family which satisfy Hammarhjelm's condition are precisely those with
\[
D=5, 13, 29, 53, 173, 293.
\]
These are all cases of prime discriminants   where Hammarhjelm's condition holds.

\subsection{$D=pq$}
Next, assume that $\Delta=D=pq$ with $p,q$ distinct primes, $p=q \bmod 4$. 

  Case 1: $p=q=1 \bmod 4$: 
We showed (Lemma~\ref{lem:4-rank}) that the $4$-rank of the narrow class group is $1$, hence $4\mid h_K^+$ and $h_K$ is even, so cannot be one.

 Case 2: $p=q=3 \bmod 4$: 
Here there is no unit of negative norm.  By Lemma~\ref{fund unit when D=1 mod 4},   
the fundamental unit  is of the form $\lambda=m+\omega = \frac{2m+1+\sqrt{D}}{2}$, and since  it has norm $N(\lambda)=+1$, and we have
\[
+1 = N(\lambda) = \frac{(2m+1)^2-D}{4} \longleftrightarrow D=(2m+1)^2-4.
\]
This is one of the family of ``narrow Richaud-Degert" extensions,  and we showed in Proposition~\ref{prop:Degert} that these never satisfy condition H if $D>5$. 

This concludes the proof of Theorem~\ref{main thm}.


\end{document}